\newcommand{\comment}[1]{}
\newif\ifpdf
\newtheorem{thm}{Theorem}[section]
\newtheorem{observation}[thm]{Observation}
\newtheorem{lemma}[thm]{Lemma}
\newtheorem{definition}[thm]{Definition}
\newtheorem{theorem}[thm]{Theorem}
\begin{document}

\title{Domination in $4$-regular Kn\"odel graphs}
\author{{\small D.A. Mojdeh$^{a}$, S.R. Musawi}$^{b}$ and {\small E. Nazari$^{b}$}\\
\\{\small $^{a}$Department of
Mathematics, University of Mazandaran,}\\ {\small Babolsar, Iran}\\
{\small Email: damojdeh@yahoo.com}\\
{\small $^{b}$Department of Mathematics, University of Tafresh,}\\ {\small Tafresh, Iran}}
\date{}
\maketitle

\begin{abstract}
A subset $D$ of vertices of a graph $G$ is a \textit{dominating
set} if for each $u\in V(G) \setminus D$, $u$ is adjacent to some
vertex $v\in D$. The \textit{domination number}, $\gamma(G)$ of
$G$, is the minimum cardinality of a dominating set of $G$. For an even
integer $n\ge 2$ and $1\le \Delta \le \lfloor\log_2n\rfloor$, a
\textit{Kn\"odel graph} $W_{\Delta,n}$ is a $\Delta$-regular
bipartite graph of even order $n$, with vertices $(i,j)$, for
$i=1,2$ and $0\le j\le n/2-1$, where for every $j$, $0\le j\le
n/2-1$, there is an edge between vertex $(1, j)$ and every vertex
$(2,(j+2^k-1)$ mod (n/2)), for $k=0,1,\cdots,\Delta-1$. In this
paper, we determine the domination number in $4$-regular
Kn\"odel graphs $W_{4,n}$.

\textbf{Keywords:} Kn\"odel graph, domination number, Pigeonhole Principal\\
\textbf{Mathematics Subject Classification [2010]:} 05C69, 05C30
\end{abstract}

\section{introduction}
For graph theory notation and terminology not given here, we refer
to \cite{hhs}. Let $G=(V,E)$ denote a simple graph of order
$n=|V(G)|$ and size $m=|E(G)|$. Two vertices $u,v\in V(G)$ are
\textit{adjacent} if $uv\in E(G)$. The \textit{open neighborhood}
of a vertex $u\in V(G)$ is denoted by $N(u)=\{v\in V(G) | uv\in
E(G)\}$ and for a vertex set $S\subseteq V(G)$,
$N(S)=\underset{u\in S}{\cup}N(u)$. The cardinality of $N(u)$ is
called the \textit{degree} of $u$ and is denoted by $\deg(u)$,
(or $\deg_G(u)$ to refer it to $G$). The \textit{closed neighborhood}
of a vertex $u\in V(G)$ is denoted by $N[u]=N(u)\cup\{u\}$ and for a vertex set $S\subseteq V(G)$,
$N[S]=\underset{u\in S}{\cup}N[u]$. The \textit{maximum degree}
and \textit{minimum degree} among all vertices in $G$ are denoted
by $\Delta(G)$ and $\delta(G)$, respectively. A graph $G$ is a
\textit{bipartite graph} if its vertex set can partition to two
disjoint sets $X$ and $Y$ such that each edge in $E(G)$ connects
a vertex in $X$ with a vertex in $Y$. A set $D\subseteq V(G)$ is
a \textit{dominating set} if for each $u\in V(G) \setminus D$,
$u$ is adjacent to some vertex $v\in D$. The \textit{domination
number}, $\gamma(G)$ of $G$, is the minimum cardinality of a
dominating set of $G$. The concept of domination theory is a
widely studied concept in graph theory and for a comprehensive
study see, for example \cite{hhs}.

An interesting family of graphs namely \textit{Kn\"odel graphs}
have been introduced about 1975 \cite{k}, and they have been
studied seriously by some authors since 2001. For an even integer
$n\ge 2$ and $1\le \Delta \le \lfloor\log_2n\rfloor$, a
\textit{Kn\"odel graph} $W_{\Delta,n}$ is a $\Delta$-regular
bipartite graph of even order $n$, with vertices $(i,j)$, for
$i=1,2$ and $0\le j\le n/2-1$, where for every $j$, $0\le j\le
n/2-1$, there is an edge between vertex $(1, j)$ and every vertex
$(2,(j+2^k-1)$ mod (n/2)), for $k=0,1,\cdots,\Delta-1$ (see
\cite{xxyf}). Kn\"odel graphs, $W_{\Delta,n}$, are one of the
three important families of graphs that they have good properties
in terms of broadcasting and gossiping, see for example
\cite{gh}. It is worth-noting that any Kn\"odel graph is a Cayley
graph and so it is a vertex-transitive graph (see \cite{fr}).

Xueliang et. al. \cite{xxyf} studied the domination number in
$3$-regular Kn\"odel graphs $W_{3,n}$. They obtained exact
domination number for $W_{3,n}$. Mojdeh et al. \cite{mmnj} determined the total domination number in
$3$-regular Kn\"odel graphs $W_{3,n}$. In this paper, we determine the
domination number in $4$-regular Kn\"odel graphs $W_{4,n}$. The following is useful. 
\begin{theorem}[\cite{b}, \cite{was}]\label{th1}
For any graph $G$ of order $n$ with maximum degree $\Delta(G)$, $\frac{n}{1+\Delta(G)}\le\gamma(G)\le n-\Delta(G)$.
\end{theorem}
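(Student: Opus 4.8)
The plan is to establish the two inequalities separately, since each rests on a different and entirely elementary idea: the lower bound comes from a covering (pigeonhole) count on closed neighborhoods, and the upper bound from exhibiting one explicit dominating set built around a vertex of maximum degree.

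For the lower bound $\gamma(G)\ge \frac{n}{1+\Delta(G)}$, I would fix any dominating set $D$ and observe that, because $D$ dominates, every vertex of $G$ lies in the closed neighborhood of some vertex of $D$; that is, $V(G)=\bigcup_{v\in D} N[v]$. Each closed neighborhood satisfies $|N[v]|=1+\deg(v)\le 1+\Delta(G)$, so counting with multiplicity gives
\[
n=|V(G)|=\Big|\bigcup_{v\in D}N[v]\Big|\le \sum_{v\in D}|N[v]|\le |D|\,(1+\Delta(G)).
\]
Rearranging yields $|D|\ge \frac{n}{1+\Delta(G)}$, and taking the minimum over all dominating sets $D$ gives the claim. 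This step is the cleaner of the two and requires no structural hypotheses beyond finiteness.

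For the upper bound $\gamma(G)\le n-\Delta(G)$, I would select a vertex $v$ with $\deg(v)=\Delta(G)$ and set $D=V(G)\setminus N(v)$. The point to verify is that $D$ is indeed dominating: any vertex in $N(v)$ is adjacent to $v$, and here I would use that $G$ is a simple graph so that $v\notin N(v)$, whence $v\in D$ and every neighbor of $v$ is dominated; every remaining vertex lies in $D$ itself. Since $|N(v)|=\Delta(G)$, we get $|D|=n-\Delta(G)$, so $\gamma(G)\le n-\Delta(G)$.

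I do not expect a serious obstacle here, as both arguments are short. The only subtlety worth flagging is the implicit reliance on $G$ being simple (no loops) in the upper-bound construction, which guarantees $v$ itself belongs to the complement of $N(v)$; without this one could not conclude that $N(v)$ is dominated. I would state the two bounds as a combined short proof, presenting the counting inequality first and the construction second.
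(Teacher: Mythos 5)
Your proof is correct: the closed-neighborhood counting argument gives the lower bound and the set $V(G)\setminus N(v)$ for a maximum-degree vertex $v$ gives the upper bound, exactly as in the classical treatments. The paper itself offers no proof---it cites the result to Berge and to Walikar, Acharya and Sampathkumar---and your two arguments are the standard ones for these bounds, with the remark about simplicity of $G$ (so that $v\notin N(v)$) being the right detail to flag.
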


We need also the following simple observation from number theory.

\begin{observation}\label{obs0}
If $a$, $b$, $c$, $d$ and $x$ are positive integers such that
$x^a-x^b=x^c-x^d\ne0$, then $a=c$ and $b=d$.
\end{observation}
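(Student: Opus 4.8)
The plan is to reduce the equation $x^a - x^b = x^c - x^d$ to a comparison of the power of $x$ dividing each side, after factoring out common powers. First I would dispose of the degenerate case: if $x = 1$ then every $x^t = 1$, so the difference vanishes, contradicting the hypothesis $x^a - x^b \neq 0$; hence $x \geq 2$. The same hypothesis forces $a \neq b$ and $c \neq d$, so each difference is a genuine nonzero quantity.

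The equation enjoys two symmetries I would exploit: negating both sides swaps $a \leftrightarrow b$ together with $c \leftrightarrow d$, while interchanging the two sides swaps $(a,b) \leftrightarrow (c,d)$. Using the first, I assume without loss of generality that $a > b$, so $x^a - x^b > 0$; the equality then forces $x^c - x^d > 0$, i.e. $c > d$. Factoring gives
\[
x^b\bigl(x^{a-b} - 1\bigr) = x^d\bigl(x^{c-d} - 1\bigr),
\]
where the exponents $a-b$ and $c-d$ are at least $1$.

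The key observation is that $x^m - 1 \equiv -1 \pmod{x}$ for every $m \geq 1$, so each of $x^{a-b}-1$ and $x^{c-d}-1$ is coprime to $x$; in particular neither is divisible by $x$ since $x \geq 2$. Using the side-swap symmetry to assume $b \leq d$ and dividing the displayed identity by $x^b$ yields
\[
x^{a-b} - 1 = x^{d-b}\bigl(x^{c-d} - 1\bigr).
\]
If $d > b$, the right-hand side is divisible by $x$ while the left-hand side is not, a contradiction; hence $b = d$. Cancelling the common factor $x^b = x^d$ leaves $x^{a-b} = x^{c-d}$, and since $x \geq 2$ the map $t \mapsto x^t$ is strictly increasing, giving $a - b = c - d$ and therefore $a = c$.

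I expect the only delicate point to be ensuring the coprimality step remains valid when $x$ is composite, where there is no single "exact power of $x$" to read off. This is precisely why I would phrase the argument through divisibility by $x$ — equivalently through the congruence $x^m - 1 \equiv -1 \pmod{x}$ — rather than through unique factorization, so that no prime factor of $x$ ever needs to be singled out.
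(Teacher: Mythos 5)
Your argument is correct and complete. The paper states this observation without proof (it is introduced only as a ``simple observation from number theory''), so there is no authorial argument to compare yours against; your route --- rule out $x=1$, factor out the smaller power of $x$, and use $x^m-1\equiv -1\pmod{x}$ to force $b=d$ even when $x$ is composite --- is the standard one, and both of your without-loss-of-generality reductions are legitimately justified by the symmetries you name and are compatible with each other.
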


\section{Properties in the Kn\"odel graphs}
In this section we review some properties in the Kn\"odel graphs that are proved in \cite{mmnj}. Mojdeh et al. considered a re-labeling on the vertices of a
Kn\"odel graph as follows: we label $(1,i)$ by $u_{i+1}$ for each
$i=0,1,...,n/2-1$, and $(2,j)$ by $v_{j+1}$ for
$j=0,1,...,n/2-1$. Let $U=\{u_1,u_2,\cdots, u_{\frac{n}{2}}\}$
and $V=\{v_1,v_2,\cdots, v_{\frac{n}{2}}\}$. From now on, the
vertex set of each Kn\"odel graph $W_{\Delta,n}$ is $U\cup V$
such that $U$ and $V$ are the two partite sets of the graph. If
$S$ is a set of vertices of $W_{\Delta,n}$, then clearly, $S\cap
U$ and $S\cap V$ partition $S$, $|S|=|S\cap U|+|S\cap V|$,
$N(S\cap U)\subseteq V$ and $N(S\cap V)\subseteq U$. Note that
two vertices $u_i$ and $v_j$ are adjacent if and only if $j\in
\{i+2^0-1,i+2^1-1,\cdots,i+2^{\Delta-1}-1\}$, where the addition
is taken in modulo $n/2$. For any subset $\{u_{i_1},u_{i_2},\cdots,u_{i_k}\}$ of $U$ with
$1\le i_1 <i_2<\cdots< i_k\le\frac{n}{2}$, it is corresponded a
sequence based on the differences of the indices of $u_j$,
$j=i_1,...,i_k$, as follows.

\begin{definition}\label{def1}
For any subset $A=\{u_{i_1},u_{i_2},\cdots,u_{i_k}\}$ of $U$ with
$1\le i_1 <i_2<\cdots< i_k\le\frac{n}{2}$ we define a sequence
$n_1, n_2, \cdots, n_k$, namely \textbf{cyclic-sequence}, where
$n_j=i_{j+1}-i_j$ for $1\le j\le k-1$ and
$n_k=\frac{n}{2}+i_1-i_k$. For two vertices $u_{i_j},
u_{i_{j'}}\in A$ we define \textbf{index-distance} of  $u_{i_j}$
and $u_{i_{j'}}$ by $id(u_{i_j},
u_{i_{j'}})=min\{|i_j-i_{j'}|,\frac{n}{2}-|i_j-i_{j'}|\}$.
\end{definition}

\begin{observation}\label{obs1}
Let $A=\{u_{i_1},u_{i_2},\cdots,u_{i_k}\}\subseteq U$ be a set
such that $1\le i_1 <i_2<\cdots< i_k\le\frac{n}{2}$ and let $n_1,
n_2, \cdots, n_k$ be the corresponding cyclic-sequence of $A$.
Then,\\
(1) $n_1+n_2+\cdots+n_k=\frac{n}{2}$.\\
(2) If $u_{i_j}, u_{i_{j'}}\in A$, then $id(u_{i_j}, u_{i_{j'}})$
equals to sum of some consecutive elements of the cyclic-sequence
of $A$ and $\frac{n}{2}-id(u_{i_j}, u_{i_{j'}})$ is sum of the
remaining elements of the cyclic-sequence. Furthermore,
$\{id(u_{i_j},u_{i_{j'}}),\frac{n}{2}-id(u_{i_j},
u_{i_{j'}})\}=\{|i_j-i_{j'}|,\frac{n}{2}-|i_j-i_{j'}|\}$.
\end{observation}

Let $\mathscr{M}_{\Delta}=\{ 2^a-2^b:0 \leq b<a< \Delta \}$ for $\Delta\geq 2$.

\begin{lemma}\label{lem0}
In the Kn\"odel graph $W_{\Delta,n}$ with vertex set $U\cup V$,
$N(u_i)\cap N(u_j)\ne \emptyset$ if and only if $id(u_i,u_j)\in
\mathscr{M}_{\Delta}$ or $\frac{n}{2}-id(u_i,u_j)\in
\mathscr{M}_{\Delta}$.
\end{lemma}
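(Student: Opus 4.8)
The plan is to reduce the intersection condition to a congruence on the indices and then translate it into the language of index-distance. First I would write out the neighborhoods explicitly: by the adjacency rule recalled before Definition~\ref{def1}, $N(u_i)=\{v_{\,i+2^k-1}:0\le k\le\Delta-1\}$ with the subscripts read modulo $\frac n2$, and similarly for $u_j$. Hence $N(u_i)\cap N(u_j)\ne\emptyset$ holds exactly when there are exponents $k,k'\in\{0,\dots,\Delta-1\}$ with $i+2^k-1\equiv j+2^{k'}-1\pmod{\frac n2}$, which rearranges to the single congruence $i-j\equiv 2^{k'}-2^k\pmod{\frac n2}$.

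Next I would identify the set of values the right-hand side can take. For $u_i\ne u_j$ we have $k\ne k'$; writing $a=\max\{k,k'\}$ and $b=\min\{k,k'\}$, the quantity $2^{k'}-2^k$ equals $\pm(2^a-2^b)$ with $0\le b<a<\Delta$, so it ranges exactly over $\mathscr{M}_{\Delta}\cup(-\mathscr{M}_{\Delta})$. Thus the intersection is nonempty iff $i-j\equiv\pm m\pmod{\frac n2}$ for some $m\in\mathscr{M}_{\Delta}$. Setting $d=|i-j|$, so that $(i-j)\bmod\frac n2\in\{d,\frac n2-d\}$, this says precisely that $d\in\mathscr{M}_{\Delta}$ or $\frac n2-d\in\mathscr{M}_{\Delta}$: the sign ambiguity collapses, because reducing $-m$ modulo $\frac n2$ merely interchanges the two members of the pair $\{d,\frac n2-d\}$.

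At this point I would insert the one genuine numerical observation, which is also where the hypothesis $\Delta\le\lfloor\log_2 n\rfloor$ enters: every element of $\mathscr{M}_{\Delta}$ is at most $2^{\Delta-1}-1<2^{\Delta-1}\le\frac n2$, so each member of $\mathscr{M}_{\Delta}$ already coincides with its own residue modulo $\frac n2$ and no wrap-around can create or destroy a coincidence. Finally, since $id(u_i,u_j)=\min\{d,\frac n2-d\}$ and the unordered pair $\{d,\frac n2-d\}$ is identical to $\{id(u_i,u_j),\frac n2-id(u_i,u_j)\}$, the condition ``$d\in\mathscr{M}_{\Delta}$ or $\frac n2-d\in\mathscr{M}_{\Delta}$'' is word-for-word the condition ``$id(u_i,u_j)\in\mathscr{M}_{\Delta}$ or $\frac n2-id(u_i,u_j)\in\mathscr{M}_{\Delta}$'', which is the assertion of the lemma.

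The main obstacle is purely bookkeeping rather than conceptual: one must keep the two equivalent descriptions $\{d,\frac n2-d\}$ and $\{id,\frac n2-id\}$ straight while absorbing the $\pm$ sign coming from the symmetric roles of $k$ and $k'$, and must verify the no-wrap-around bound so that membership in $\mathscr{M}_{\Delta}$ before and after reduction modulo $\frac n2$ genuinely agree. Once the bound $2^{\Delta-1}-1<\frac n2$ is in hand, each equivalence above is reversible, giving the stated ``if and only if''.
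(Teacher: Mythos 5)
Your argument is correct: expanding the neighborhoods, reducing to the congruence $i-j\equiv 2^{k'}-2^{k}\pmod{n/2}$, identifying the right-hand side with $\mathscr{M}_{\Delta}\cup(-\mathscr{M}_{\Delta})$, and using the bound $2^{\Delta-1}-1<\frac n2$ to rule out wrap-around is exactly the natural route, and the translation to $\{id(u_i,u_j),\frac n2-id(u_i,u_j)\}$ is handled properly. The paper itself does not reproduce a proof of this lemma (it only cites \cite{mmnj}), so there is nothing to contrast with; the only remark worth adding is that the statement, and your proof, implicitly assume $u_i\ne u_j$, since for $i=j$ the intersection is nonempty while $id(u_i,u_j)=0\notin\mathscr{M}_{\Delta}$ and $\frac n2\notin\mathscr{M}_{\Delta}$.
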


\section{4-regular Kn\"odel graphs}
In this section we determine the domination number in 4-regular Kn\"odel graphs $W_{4,n}$. Note that $n\geq 16$ by the definition. For this purpose, we prove the following lemmas namely Lemma \ref{lem1}, \ref{lem2}, \ref{lem3}, \ref{lem4} and \ref{lem5}.

\begin{lemma}\label{lem1}
For each even integer $n\ge 16$, we have  $\gamma (W_{4,n})=2\lfloor\frac{n}{10}\rfloor+\left \{\begin{array}{cc}0&n\equiv0 \emph{ (mod 10)}\\2&n\equiv 2,4\emph{ (mod 10)} \end{array}  \right. .$
\end{lemma}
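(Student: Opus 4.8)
The plan is to prove matching upper and lower bounds, writing $n=10m+t$ with $t\in\{0,2,4\}$ and recalling at the outset that $\mathscr{M}_4=\{2^a-2^b:0\le b<a\le 3\}=\{1,2,3,4,6,7\}$, so that $5\notin\mathscr{M}_4$ while $6\in\mathscr{M}_4$; this single arithmetic fact drives everything. For the \textbf{upper bound} I would exhibit an explicit dominating set built periodically modulo $5$. Since $u_i$ dominates exactly $v_i,v_{i+1},v_{i+3},v_{i+7}$ and the offsets $\{0,1,3,7\}$ reduce modulo $5$ to $\{0,1,2,3\}$, the set $D_U=\{u_i:i\equiv r\pmod 5\}$ dominates every $v_j$ except those with $j\equiv r+4\pmod 5$; symmetrically $v_j$ dominates $u_j,u_{j-1},u_{j-3},u_{j-7}$, so $D_V=\{v_j:j\equiv r+4\pmod 5\}$ dominates every $u_i$ except those with $i\equiv r\pmod 5$. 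Thus $D=D_U\cup D_V$ is dominating, and when $5\mid \tfrac n2$ (i.e. $n\equiv0$) it has size $2\cdot\tfrac{n}{10}=2\lfloor n/10\rfloor$. For $n\equiv 2,4$ the residue count $\tfrac n2=5m+1,\,5m+2$ is not a multiple of $5$, and I would keep the periodic pattern on the bulk and repair the one or two leftover indices by adjoining a total of two extra vertices, then verify domination at the seam directly; this yields size $2m+2=2\lfloor n/10\rfloor+2$.

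For the \textbf{lower bound}, set $p=|D\cap U|$ and $q=|D\cap V|$ for a dominating set $D$. Because $D\cap V$ must dominate all of $U\setminus(D\cap U)$ and each $V$-vertex has four neighbours, $\tfrac n2\le p+4q$, and symmetrically $\tfrac n2\le q+4p$. For $n\equiv0$ these already give $|D|\ge\lceil n/5\rceil=2m$, matching the construction (consistent with Theorem~\ref{th1}). Suppose now $|D|=p+q=2m+1$. When $n=10m+4$ (so $\tfrac n2=5m+2$) the two inequalities force $3q\ge 3m+1$ and $3p\ge 3m+1$, hence $p,q\ge m+1$ and $p+q\ge 2m+2$, a contradiction; so $\gamma\ge 2m+2$.

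The genuinely delicate case is $n=10m+2$ (so $\tfrac n2=5m+1$), where the same inequalities only yield $p,q\ge m$, hence $\{p,q\}=\{m,m+1\}$; say $p=m,\,q=m+1$. Then $\tfrac n2\le q+|N(D\cap U)|\le q+4p$ reads $5m+1\le (m+1)+4m$, an \emph{equality}, forcing $|N(D\cap U)|=4m$, i.e. the $m$ neighbourhoods of the vertices of $D\cap U$ are pairwise disjoint. By Lemma~\ref{lem0} this means no two indices of $D\cap U$ have index-distance in $\mathscr{M}_4$, so in the cyclic-sequence $g_1,\dots,g_m$ of $D\cap U$ (Definition~\ref{def1} and Observation~\ref{obs1}) each gap satisfies $g_i\notin\{1,2,3,4,6,7\}$, whence $g_i\in\{5\}\cup\{8,9,\dots\}$. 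But $\sum_i g_i=\tfrac n2=5m+1$ with $m\ge2$ forces every $g_i\le 5m+1-5(m-1)=6$, so each $g_i\in\{5,6\}$; since $6\in\mathscr{M}_4$ is excluded, all $g_i=5$ and $\sum_i g_i=5m\ne 5m+1$, a contradiction. The symmetric case $p=m+1,\,q=m$ is identical after exchanging the roles of $U$ and $V$. Hence $\gamma(W_{4,n})\ge 2m+2$ for $n\equiv2$, completing the matching bound.

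The main obstacle I expect is twofold. First, making the construction's seam-repair for $n\equiv2,4$ fully explicit and checking domination exactly at the $O(1)$ patched indices, since the clean modulo-$5$ coverage breaks there. Second, and more subtly, the $n\equiv2$ lower bound: one must first extract that perfect efficiency $|N(D\cap U)|=4m$ is \emph{forced}, and then convert ``pairwise disjoint neighbourhoods'' into the gap condition $g_i\notin\mathscr{M}_4$ through Lemma~\ref{lem0} before the excess count on $\sum_i g_i=5m+1$ closes the argument. The fact that $5\notin\mathscr{M}_4$ but $6\in\mathscr{M}_4$ is precisely what makes the target sum $5m+1$ unreachable by admissible gaps.
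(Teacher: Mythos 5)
Your proposal is correct and follows essentially the same route as the paper: the lower bound comes from counting how many vertices of $V$ the set $D\cap U$ can dominate, combined with the observation that consecutive gaps avoiding $\mathscr{M}_4=\{1,2,3,4,6,7\}$ must lie in $\{5,8,9,\dots\}$ and hence cannot sum to $5t+1$, which is exactly the paper's cyclic-sequence contradiction for $n\equiv 2\pmod{10}$ (the $n\equiv 4$ case needs only the crude count, as you note). The only part you leave schematic is the explicit seam repair in the upper-bound construction for $n\equiv 2,4\pmod{10}$; the paper's sets $\{u_1,u_6,\dots,u_{5t+1}\}\cup\{v_5,v_{10},\dots,v_{5t}\}\cup\{v_{5t+1}\}$ (respectively $\cup\{v_3\}$) are precisely the repaired periodic pattern you describe, and the verification is routine.
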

\begin{proof} 
First assume that $n\equiv 0$ (mod 10). Let $n=10t$, where $t\ge 2$. By Theoerem \ref{th1}, $\gamma (W_{4,n})\ge \frac{n}{5}=2t$. On the other hand, we can see  that the set $D=\{u_1,u_6,\cdots,u_{5t-4}\}\cup\{v_5,v_{10},\cdots,v_{5t}\}$ is a dominating set with $2t$ elements, and we have $\gamma (W_{4,n})=2t=2\lfloor\frac{n}{10}\rfloor$, as desired. 

Next assume that $n\equiv 2$ (mod 10). Let $n=10t+2$, where $t\ge 2$.  By Theoerem \ref{th1}, we have $\gamma (W_{4,n})\ge \frac{n}{5}>2t$. Suppose that $\gamma(W_{4,n})=2t+1$. Let $D$ be a minimum dominating set of $W_{4,n}$. Then by the Pigeonhole Principal either $|D\cap U|\le t$ or  $|D\cap V|\le t$. Without loss of generality, assume that $|D\cap U|\le t$. Let $|D\cap U|= t-a$, where $a\ge 0$. Then $|D\cap V|= t+1+a$. Observe that $D\cap U$ dominates at most $4t-4a$ vertices of $V$ and therefore $D$ dominates at most $(4t-4a)+(t+1+a)=5t-3a+1$ vertices of $V$. Since $D$ dominates all vertices of $V$, we have $5t-3a+1\ge 5t+1$ and so $a=0$, $|D\cap U|=t$ and $|D\cap V|=t+1$. Let $D\cap U=\{u_{i_1},u_{i_2},\cdots,u_{i_t}\}$ and  $n_1, n_2, \cdots, n_t$ be the cyclic-sequence of  $D\cap U$. By Observation \ref{obs1}, we have $\overset{t}{\underset{k=1}{\sum}}n_k=5t+1$ and therefore there exist some  $k$ such that $n_k\in\mathscr{M}_4= \{1,2,3,4,6,7\}$. Then by Lemma \ref{lem0}, $|N(u_k)\cap N(u_{k+1})\ge1$.  Hence, $D\cap U$ dominates at most $4t-1$ vertices of $V$, that is, $D$ dominates at most $(4t-1)+(t+1)=5t$ vertices of $V$, a contradiction. Now we deduce that $\gamma(W_{4,n})\geq 2t+2$. On the other hand the set $D=\{u_1,u_6,\cdots,u_{5t+1}\}\cup\{v_5,v_{10},\cdots,v_{5t}\}\cup\{v_{5t+1}\}$ is a dominating set for $W_{4,n}$ with $2t+2$ elements. Consequently,  $\gamma(W_{4,n})=2t+2$.

It remains to assume that $n\equiv 4$ (mod 10). Let $n=10t+4$, where $t\ge 2$. By Theorem \ref{th1}, we have $\gamma (W_{4,n})\ge \frac{n}{5}>2t$. Suppose that $\gamma(W_{4,n})=2t+1$. Let $D$ be a minimum dominating set of $\gamma (W_{4,n})$. Then by the Pigeonhole Principal either $|D\cap U|\le t$ or $|D\cap V|\le t$. Without loss of generality, assume that $|D\cap U|=t-a$ and $a\ge 0$. Then $|D\cap V|= t+1+a$. Observe that $D\cap U$ dominate at most $4(t-a)$ elements of $V$ and therefore $D$ dominates at most $4(t-a)+(t+1+a)=5t-3a+1$ vertices of $V$. Since $D$ dominates  all vertices of $V$, we have $5t-3a+1\ge |V|=5t+2 $ and so $-3a\ge 1$, a contradiction. Thus $\gamma (W_{4,n})>2t+1$. On the other hand, the set $\{u_1,u_6,\cdots,u_{5t+1}\}\cup\{v_5,v_{10},\cdots,v_{5t}\}\cup\{v_3\}$ is a dominating set with $2t+2$ elements. Consequently, $\gamma (W_{4,n})=2t+2$.
\end{proof}

\begin{lemma}\label{lem2}
For each even integer $n\ge 46$ with $n\equiv6 \emph{ (mod 10)}$, we have $\gamma (W_{4,n})=2\lfloor\frac{n}{10}\rfloor+3$.
\end{lemma}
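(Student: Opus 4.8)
The plan is to prove the matching bounds $\gamma(W_{4,n})\le 2t+3$ and $\gamma(W_{4,n})\ge 2t+3$, writing $n=10t+6$ with $t\ge 4$, so that $\frac n2=5t+3$ and $\lfloor\frac n{10}\rfloor=t$. Throughout I use that $u_iv_j$ is an edge iff $j-i\in\{0,1,3,7\}\pmod{\frac n2}$, together with the two congruences $\{0,1,3,7\}\equiv\{0,1,2,3\}\pmod 5$ and $\frac n2\equiv 3\pmod 5$; these make counting by residue classes modulo $5$ the natural bookkeeping tool, since each $u_i$ dominates exactly one vertex of $V$ in each of the four residue classes $i,i+1,i+2,i+3$ and none in class $i-1$.

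For the upper bound I would verify that
$$D=\{u_{5k+1}:0\le k\le t\}\cup\{v_{5m}:1\le m\le t\}\cup\{v_2,v_3\}$$
is dominating; it has $(t+1)+t+2=2t+3$ vertices. The set $\{u_{5k+1}\}$ is exactly residue class $1$ of $U$, so every $u_i$ with $i\not\equiv1$ must be covered from $V$; the vertices $v_{5m}$ dominate residue classes $0$ and $4$ of $U$ in full and all but a bounded window near index $\frac n2$ of classes $2$ and $3$, and a short computation through the wrap-around (where $\frac n2\equiv3$) shows the missing vertices are exactly those dominated by $v_2$ and $v_3$. The same computation shows $\{u_{5k+1}\}$ covers all of $V$ except $\{v_3,v_{10},v_{15},\dots,v_{5t}\}$, each of which lies in $D$. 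This part is routine.

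For the lower bound, Theorem \ref{th1} gives $\gamma(W_{4,n})\ge\frac n5>2t+1$, so it remains to exclude a dominating set $D$ of size $2t+2$. Exactly as in Lemma \ref{lem1}, the Pigeonhole Principle and the capacity bound force $|D\cap U|=|D\cap V|=t+1$: if $|D\cap U|=t+1-a$ then $D$ dominates at most $4(t+1-a)+(t+1+a)=5t+5-3a$ vertices of $V$, and $5t+5-3a\ge|V|=5t+3$ forces $a=0$. Counting domination multiplicities then shows that on each side the total multiplicity is $5(t+1)=5t+5$, so the excess over $5t+3$ is exactly $2$; thus $D\cap U$ must cover the $4t+2$ vertices of $V\setminus(D\cap V)$ with total overlap at most $2$, and symmetrically for $D\cap V$. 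Since $\frac n2\ge23>14$ the second alternative of Lemma \ref{lem0} never holds, so two vertices of $U$ share a neighbour (always a single one) exactly when their index-distance lies in $\mathscr{M}_4=\{1,2,3,4,6,7\}$; by Observation \ref{obs1} each such overlap corresponds to a contiguous block of the cyclic-sequence of $D\cap U$ summing into $\{1,2,3,4,6,7\}$. As this cyclic-sequence has $t+1$ terms summing to $5t+3$, its deficiency below the all-$5$ pattern is exactly $2$, which I would use to restrict the admissible (overlap $\le2$) sequences.

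The decisive step is to turn the two-sided requirement into a contradiction, and this is where I expect the real difficulty. Writing $a_r,b_s$ for the numbers of vertices of $D\cap U$, $D\cap V$ in residue class $r,s$, the coverage counts above yield the relations $a_2=b_1$, $a_3=b_2$, $a_4\le b_3\le a_4+1$, $b_0\le a_1\le b_0+1$ and $|a_0-b_4|\le1$, which pin the two residue profiles tightly to one another. The point is that, once $D\cap U$ is fixed, the $t-1+(\text{overlap})$ vertices of $V$ it fails to cover are \emph{forced} into $D\cap V$, leaving at most $2$ free choices. I would show that for every admissible cyclic-sequence the forced part of $D\cap V$ either violates one of these residue relations or, after the wrap-around, leaves some vertex of $U$ undominated no matter how the free slots are filled: the clustered profile $5,\dots,5,4,4$ is disposed of immediately because its (fully forced) $D\cap V$ lands in residue classes violating $a_2=b_1$, whereas the profile $5,\dots,5,3$ and the spread-out profiles need a genuine positional argument, since there residue counting only pins $D\cap V$ down and one must then exhibit, through the wrap-around, a vertex of $U$ that stays undominated. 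Carrying this out uniformly in $t$ is the main obstacle: I expect that distributing $D\cap U$ over several residue classes inevitably forces a short cyclic block, hence an extra overlap, on one of the two sides, the obstruction ultimately coming from $\frac n2\equiv3\pmod 5$, which prevents any clean periodic placement.
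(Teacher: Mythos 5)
Your setup is sound and matches the paper's: the explicit set $\{u_1,u_6,\dots,u_{5t+1}\}\cup\{v_5,\dots,v_{5t}\}\cup\{v_2,v_3\}$ for the upper bound, the pigeonhole/capacity count forcing $|D\cap U|=|D\cap V|=t+1$ for a hypothetical dominating set of size $2t+2$, and the translation of ``at most two wasted coverings'' into the statement that at most two contiguous blocks of the cyclic-sequence of $D\cap U$ sum into $\mathscr{M}_4=\{1,2,3,4,6,7\}$. All of that is correct (including the observations that the second alternative of Lemma \ref{lem0} is vacuous for $n\ge 46$ and that a common neighbour, when it exists, is unique).

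However, the decisive step is missing, and you say so yourself: ``Carrying this out uniformly in $t$ is the main obstacle.'' What remains to be done is precisely the content of the paper's proof. One must first classify the admissible cyclic-sequences --- the paper shows that either exactly one block is short, forcing $(5,\dots,5,3)$ up to rotation, or exactly two single entries lie in $\mathscr{M}_4$ with no two-term consecutive sum in $\mathscr{M}_4$, which (after the arithmetic $5t+3=\sum n_k$ rules out a short entry adjacent to a $5$ summing into $\mathscr{M}_4$) leaves the finitely many families of Table 1 such as $(8,1,4,5,\dots)$, $(4,1,8,5,\dots)$, $(3,2,8,5,\dots)$, $(8,2,3,5,\dots)$ and the two-$4$ patterns. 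Then, for each family, $D\cap V$ is essentially forced (it must contain the vertices of $V$ missed by $D\cap U$), and one must exhibit a concrete undominated vertex of $U$ (the paper uses $u_4,u_5,u_{5t-2},u_{5t+2}$ for the $(5,\dots,5,3)$ profile, $u_8$ or $u_3$ for the others). Your proposed residue-class relations $a_2=b_1$, $a_3=b_2$, etc.\ are stated without derivation, and by your own admission they dispose of at most one profile; for the rest you still need the positional, wrap-around argument. As written, the lower bound $\gamma(W_{4,n})\ge 2t+3$ is therefore not established: the proposal is a correct plan whose hardest part --- the exhaustive elimination of cyclic-sequences --- is announced but not executed.
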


\begin{proof}
Let $n=10t+6$ and $t\ge 4$. By Theorem \ref{th1}, we have $\gamma (W_{4,n})\ge \frac{n}{5}>2t+1$. The set $D=\{u_1,u_6,\cdots,u_{5t+1}\}\cup\{v_5,v_{10},\cdots,v_{5t}\}\cup\{v_2,v_3\}$ is a dominating set with $2t+3$ elements. Thus, $2t+2\le \gamma (W_{4,n})\le 2t+3$. We show that $\gamma(W_{4,n})=2t+3$. Suppose to the contrary that $\gamma(W_{4,n})=2t+2$. Let $D$ be a minimum dominating set of $W_{4,n}$. Then by the Pigeonhole Principal either $|D\cap U|\le t+1$ or  $|D\cap V|\le t+1$. Without loss of genrality, assume that $|D\cap U|= t+1-a$, where $a\ge 0$. Then $|D\cap V|= t+1+a$. Note that $D\cap U$ dominates at most $4t+4-4a$ vertices and therefore $D$ dominates at most $(4t+4-4a)+(t+1+a)=5t-3a+5$ vertices of $V$. Since $D$ dominates all vertices of $V$, we have $5t-3a+5\ge 5t+3$ and so $a=0$ and $|D\cap U|=|D\cap V|=t+1$. Also we have $|V-D|=4t+2\le |N(D\cap U)|\le 4t+4$, and similarly, $|D-U|=4t+2\le |N(D\cap V)|\le 4t+4$.

Let $D\cap U=\{u_{i_1},u_{i_2},\cdots,u_{i_{t+1}}\}$ and $n_1, n_2, \cdots, n_{t+1}$ be the cyclic-sequence of $D\cap U$.  By Observation \ref{obs1}, we have $\overset{t+1}{\underset{k=1}{\sum}}n_k=5t+3$ and therefore there exist $k'$ such that $n_{k'}<5$. Then $n_{k'} \in \mathscr{M}_4$ and by Lemma \ref{lem0}, $|N(u_{i_{k'}})\cap N(u_{i_{k'+1}})|\ge1$.  Hence, $D\cap U$ dominates at most $4t+3$ vertices from $V$ and therefore $4t+2\le N(D\cap U)\le 4t+3$.

If $|N(D\cap U)|=4t+3$, then for each $k\ne k'$ we have $n_{k} \notin \mathscr{M}_4$. If there exists  $k''\ne k'$ such that $n_{k''}\ge 8$, then $5t+3=\overset{t+1}{\underset{k=1}{\Sigma}}n_k\ge 1+8+5(t-1)=5t+4$, a contradiction, and (by symmetry) we have $n_1=n_2=\cdots =n_t=5$, $n_{t+1}=3$ and $D\cap U=\{u_1,u_6,\cdots,u_{5t+1}\}$. Observe that $D\cap U$ doesn't dominate vertices $v_3,v_5,v_{10},\cdots,v_{5t}$ and so $\{v_3,v_5,v_{10},\cdots,v_{5t}\}\subseteq D$. Thus $D=\{u_1,u_6,\cdots,u_{5t+1},v_3,v_5,v_{10},\cdots,v_{5t}\}$. But the vertices $u_4,u_5,u_{5t-2},u_{5t+2}$ are not dominated by $D$, a contradiction.

Thus, $|N(D\cap U)|=4t+2$. Then there exists precisely two pairs of vertices in $D\cap U$ with index-distances belongs to $\mathscr{M}_4$. 
If there exists an integer $1\le i'\le t+1$ such that $n_{i'}+n_{i'+1}\in \mathscr{M}_4$, then $\min\{n_{i'},n_{i'+1}\}\le 3$. Then  $\min\{n_{i'},n_{i'+1}\}\in \mathscr{M}_4$ and $\max\{n_{i'},n_{i'+1}\}\notin \mathscr{M}_4$. Now we have $\max\{n_{i'},n_{i'+1}\}=5$ and $\min\{n_{i'},n_{i'+1}\}\in\{1,2\}$, and $n_i\notin \mathscr{M}_4$, for each $i\notin\{i',i'+1\}$. Now a simple calculation shows that the equality $5t+3=\overset{t+1}{\underset{k=1}{\sum}}n_k$ does not hold. (Note that if each $n_i$ is less than 8, then we have $\overset{t+1}{\underset{k=1}{\sum}}n_k\le 2+5+5(t-1)=5t+2$; otherwise we have $\overset{t+1}{\underset{k=1}{\sum}}n_k\ge 1+5+8+5(t-2)=5t+4$.) Thus there exist exactly two indices $j$ and $k$ such that $n_j,n_k\in \mathscr{M}_4$ and $\{n_1+n_2,n_2+n_3,\cdots,n_t+n_{t+1},n_{t+1}+n_1\}\cap \mathscr{M}_4=\emptyset$. By this hypothesis, the only possible cases for the cyclic-sequence of $D\cap U$ are those demonstrated in Table 1.
\begin{table}[ht] 
\captionsetup{type=table} 
\begin{center}
\begin{tabular}{c||c|c|c|c|c|c|c|c|c}
case&1&2&3&4&5&6&7&8&$\cdots$
\\ \hline \hline
$n_1$&8&4&3&8&4&4&4&4&$\cdots$
\\ \hline
$n_2$&1&1&2&2&4&5&5&5&$\cdots$
\\ \hline
$n_3$&4&8&8&3&5&4&5&5&$\cdots$
\\ \hline
$n_4$&5&5&5&5&5&5&4&5&$\cdots$
\\ \hline
$n_5$&5&5&5&5&5&5&5&4&$\cdots$
\\ \hline
$n_6$&5&5&5&5&5&5&5&5&$\cdots$
\\ \hline
$\cdots$ &$\cdots$&$\cdots$&$\cdots$&$\cdots$&$\cdots$&$\cdots$&$\cdots$&$\cdots$&$\cdots$
\\ \hline
$n_{t+1}$&5&5&5&5&5&5&5&5&$\cdots$
\end{tabular}
\caption{$n=10t+6$}\label{t1}
\end{center}
\end{table}
Note that the each column of Table 1, shows the cyclic-sequnce of $D\cap U$. We show that each case is impossible. For this purpose, we show that the cyclic-sequnce of $D\cap U$  posed in the column $i$, for $i\geq 1$ is impossible.

\textbf{$i=1$)}. If $n_1=8, n_2=1, n_3=4, n_4=\cdots =n_{t+1}=5$ and $D\cap U=\{u_1,u_9,u_{10}, u_{14}, u_{19}, \cdots, u_{5t-1}\}$, then $D\cap U$ does not dominate the vertices $v_5,v_6,v_7,v_{18},v_{23},v_{28},\dots,v_{5t+3}$. Thus we have $D\cap V=\{v_5,v_6,v_7,v_{18},v_{23},v_{28},\dots,v_{5t+3}\}$. But $D$ does not dominate three vertices $u_8,u_{12},u_{13}$, a contradiction.

\textbf{$i=2$)}. If $n_1=4, n_2=1, n_3=8, n_4=\cdots =n_{t+1}=5$ and $D\cap U=\{u_1,u_5,u_6, u_{14}, u_{19}, \cdots, u_{5t-1}\}$. then $D\cap U$ does not dominate the vertices $v_{10},v_{11},v_{16},v_{18},v_{23},v_{28},\dots,v_{5t+3}$. Thus we have $D\cap V=\{v_{10},v_{11},v_{16},v_{18},v_{23},v_{28},\dots,v_{5t+3}\}$. But $D$ does not dominate three vertices $u_2,u_{12},u_{5t+1}$, a contradiction.

\textbf{$i\in \{3,4\}$)}. As before, we obtain that $u_8\notin D$. But $N(u_8)=\{v_8,v_9,v_{11},v_{15}\}\subseteq N(D\cap U)$ and therefore $N(u_8)\cap D=\emptyset$. Hence, $N[u_8]\cap D=\emptyset$ and $D$ does not dominate $u_8$, a contradiction.

\textbf{$i\geq 5$)}. As before, we obtain that $u_3\notin D$. But $N(u_3)=\{v_3,v_4,v_6,v_{10}\}\subseteq N(D\cap U)$ and therefore $N(u_3)\cap D=\emptyset$. Hence, $N[u_3]\cap D=\emptyset$ and $D$ does not dominate $u_3$, a contradiction.

Consequently, $\gamma(W_{4,n})=2t+3$, as desired.
\end{proof}

Lemma \ref{lem2}, determine the domination number of $W_{4,n}$ when $n\equiv6$ (mod 10) and $n\geq 46$. The only values of $n$ for $n\equiv6$ (mod 10) are thus $16,26$ and $36$. We study these cases in the following lemma.

\begin{lemma}\label{lem3}
For $n\in \{16,26,36\}$, we have:
\begin{center}
\emph {\begin{tabular}{c|c|c|c}
n&16&26&36\\
\hline
$\gamma (W_{4,n})$&4&7&8
\end{tabular}
}
\end{center}
\end{lemma}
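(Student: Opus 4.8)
The plan is to dispose of $n=16,26,36$ one at a time, in each case pinning the value between a lower bound and an explicit dominating set. For the upper bounds I would exhibit $D=\{u_1,u_5,v_3,v_7\}$ for $n=16$, $D=\{u_1,u_6,u_{11},v_2,v_3,v_5,v_{10}\}$ for $n=26$, and $D=\{u_1,u_2,u_{10},u_{11},v_6,v_7,v_{15},v_{16}\}$ for $n=36$, and check directly from $N(u_i)=\{v_i,v_{i+1},v_{i+3},v_{i+7}\}$ and $N(v_j)=\{u_j,u_{j-1},u_{j-3},u_{j-7}\}$ (indices mod $n/2$) that each dominates, giving $\gamma\le 4,7,8$. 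For the lower bounds, Theorem \ref{th1} already yields $\gamma(W_{4,16})\ge\lceil 16/5\rceil=4$ and $\gamma(W_{4,36})\ge\lceil 36/5\rceil=8$, so the cases $n=16$ and $n=36$ close immediately once the two witnesses above are verified.

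Thus the only substantive task is $\gamma(W_{4,26})\ge 7$, i.e. excluding a dominating set $D$ with $|D|=6$. Here $\tfrac n2=13$. As in the proofs of Lemmas \ref{lem1} and \ref{lem2}, I would first combine the Pigeonhole Principle with the estimate ``$D\cap U$ dominates at most $4\,|D\cap U|$ vertices of $V$'': writing $|D\cap U|=3-a$ and $|D\cap V|=3+a$, the set $D$ dominates at most $4(3-a)+(3+a)=15-3a$ vertices of $V$, so $15-3a\ge 13$ forces $a=0$ and $|D\cap U|=|D\cap V|=3$. Consequently each of $D\cap U,\,D\cap V$ must dominate the $13-3=10$ vertices of the opposite part lying outside $D$, i.e. $|N(D\cap U)|\ge 10$ and $|N(D\cap V)|\ge 10$. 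The overlap data I need follows from Lemma \ref{lem0} with $\mathscr{M}_4=\{1,2,3,4,6,7\}$: a short computation with the difference set $\{0,1,3,7\}$ in $\mathbb{Z}_{13}$ shows that two $U$-vertices at index-distance $d$ satisfy $|N(u_i)\cap N(u_j)|=0$ if $d=5$, $=1$ if $d\in\{1,2,3,4\}$, and $=2$ if $d=6$.

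Let $(n_1,n_2,n_3)$ be the cyclic-sequence of $D\cap U$, so $n_1+n_2+n_3=13$ by Observation \ref{obs1} and, through Lemma \ref{lem0}, the mutual overlaps of $N(u_{i_1}),N(u_{i_2}),N(u_{i_3})$ are controlled by $n_1,n_2,n_3$ via the table above. Tabulating the overlap total over all compositions of $13$ into three parts shows that $|N(D\cap U)|\ge 10$ holds only for the gap-multisets $\{1,4,8\},\{2,3,8\},\{4,4,5\},\{3,5,5\}$ (every other composition forces total overlap at least $3$, and a check of possible triple intersections confirms these never rescue $|N(D\cap U)|$ to $10$). For each admissible sequence the complement $V\setminus N(D\cap U)$ has only two or three vertices, so $D\cap V$ is forced (or forced up to one free vertex); I would then compute $N(D\cap V)$ and locate a vertex of $U$ it fails to cover, contradicting that $D$ dominates $U$. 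Carrying out this finite check gives $\gamma(W_{4,26})\ge 7$ and hence $\gamma(W_{4,26})=7$.

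I expect the main obstacle to be exactly this classification of admissible cyclic-sequences together with the per-case search for an undominated $U$-vertex, mirroring the table-driven analysis of Lemma \ref{lem2}; the arithmetic is routine once the overlap sizes above are in hand, but care is needed to treat both cyclic orientations of $\{1,4,8\}$ and $\{2,3,8\}$ (the multisets with distinct entries), since $W_{4,26}$ need not admit an orientation-reversing automorphism, whereas the repeated-entry multisets $\{4,4,5\}$ and $\{3,5,5\}$ give a single cyclic order each.
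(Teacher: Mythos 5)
Your proposal is correct and follows essentially the same route as the paper: Theorem \ref{th1} plus explicit dominating sets settle $n=16$ and $n=36$ immediately, and for $n=26$ the Pigeonhole argument forces $|D\cap U|=|D\cap V|=3$, after which the cyclic-sequence/overlap analysis isolates exactly the gap patterns $\{1,4,8\},\{2,3,8\},\{4,4,5\},\{3,5,5\}$ (the paper's Table 2 together with its separate $(3,5,5)$ case) and eliminates each by exhibiting an undominated vertex of $U$. Your witness sets for $n=16$ and $n=26$ differ from the paper's but check out, and your exact pairwise-overlap counts ($0$, $1$, or $2$ according to index-distance) are a slightly cleaner packaging of the paper's count of memberships in $\mathscr{M}_4$.
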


\begin{proof}
For $n=16$, by Theorem \ref{th1} we have $\gamma (W_{4,16})\ge \frac{16}{5}>3$. On the other hand, the set $D=\{u_1,u_2,v_6,v_7\}$ is a dominating set for $W_{4,16}$, and therefore $\gamma (W_{4,16})=4$.

For $n=26$, by Theorem \ref{th1} we have $\gamma (W_{4,26})\ge \frac{26}{5}>5$. On the other hand the set $D=\{u_1,u_4,u_9,u_{10},v_1,v_2,v_6\}$ is a dominating set for $W_{4,26}$  and therefore $6\le\gamma (W_{4,26})\le7$. We show that $\gamma (W_{4,26})=7$. Suppose to the contrary, that $\gamma (W_{4,26})=6$. Let $D$ be a minimum dominating set for $W_{4,26}$. Then by the Pigeonhole Principal either $|D\cap U|\le 3$ or $|D\cap V|\le 3$. If $|D\cap U|=3-a$, where $a\ge 0$, then $|D\cap V|= 3+a$. Now, the elements of $D\cap U$ dominate at most $4(3-a)$ elements of $V$ and $D$ dominates at most $4(3-a)+(3+a)=15-3a$ vertices of $V$. Thus $15-3a\ge |V|=13$ that implies $a=0$ and $|D\cap U|=|D\cap V|=3$. Let $|D\cap U|=\{u_1,u_i,u_j\}$, where $1<i<j\le 13$ and $n_1=i-1,n_2=j-i,n_3=13+1-j$. Since $n_1+n_2+n_3=13$, we have $\{n_1,n_2,n_3\}\cap \mathscr{M}_4\ne \emptyset$. 

If $\mathscr{M}_4$ includs at least three numbers of $n_1,n_2,n_3,n_1+n_2,n_2+n_3,n_3+n_1$, then by Lemma \ref{lem0}, $D\cap U$ dominates at most $4\times3-3=9$ vertices of $V$ and $|D\cap V|\ge 13-9=4$, a contradiction.

If $\mathscr{M}_4$ includs exactly one number of $n_1,n_2,n_3$, then we have, by symmetry, $n_1=3,n_2=5,n_3=5$ and $D\cap U=\{u_1,u_4,u_9\}$, $N(D\cap U)=\{v_1,v_2,v_3,v_4,v_5,v_7,v_8,v_9,v_{10},v_{11},v_{12}\}$. Then $\{v_6,v_{13}\}\subseteq D$. But $\{u_1,u_4,u_9,v_6,v_{13}\}$ does not dominate the vertices $\{u_2,u_7,u_8,u_{11}\}$ and we need at least 2 other vertices to dominate this four vertices, and hence $|D|\ge 7$, a contradiction.

Thus, we assume that $\mathscr{M}_4$ includs two numbers of $n_1,n_2,n_3$ and  $\{n_1+n_2,n_2+n_3,n_3+n_1\}\cap \mathscr{M}_4=\emptyset$. We thus have five possibilities for the cyclic-sequence of $D\cap U$ taht are demonstrated in Table \ref{t2}. Note that the each column of Table \ref{t2}, shows the cyclic-sequence of $D\cap U$. We show that each case is impossible. For this purpose, we show that the cyclic-sequence of $D\cap U$  posed in the column $i$, for $i\geq 1$ is impossible.

\begin{table}[ht] 
\begin{center}
\captionsetup{type=table} 

\begin{tabular}{c||c|c|c|c|c}
 $case$&1&2&3&4&5
\\ \hline \hline
 $n_1$&1&1&2&2&4
\\ \hline
$n_2$&4&8&3&8&4
\\ \hline 
$n_3$&8&4&8&3&5
\end{tabular}
\caption{$n=26$}\label{t2}
\end{center}
\end{table} 

\textbf{$i=1$)} If $n_1=1,n_2=4,n_3=8$, then $D\cap U=\{u_1,u_2,u_6\}$ and $N(D\cap U)=\{v_1,v_2,\cdots,v_9,v_{13}\}$. Thus $\{v_{10},v_{11},v_{12}\}\subseteq D$ and $D=\{u_1,u_2,u_6,v_{10},v_{11},v_{12}\}$. But $D$ does not dominate the vertex $u_{13}$, a contradiction.

 \textbf{$i=2$)} If $n_1=1,n_2=8,n_3=4$, then $D\cap U=\{u_1,u_2,u_{10}\}$ and $N(D\cap U)=\{v_1,v_2,\cdots,v_{11},v_{13}\}$. Thus $\{v_6,v_7,v_{12}\}\subseteq D$ and $D=\{u_1,u_2,u_{10},v_6,v_7,v_{12}\}$. But $D$ does not dominate the vertex $u_8$, a contradiction.

 \textbf{$i=3$)} If $n_1=2,n_2=3,n_3=8$, then $D\cap U=\{u_1,u_3,u_6\}$, $N(D\cap U)=\{v_1,v_2,\cdots,v_{10},v_{13}\}$.Thus $\{v_5,v_{11},v_{12}\}\subseteq D$ and $D=\{u_1,u_3,u_6,v_5,v_{11},v_{12}\}$. But $D$ does not dominate the vertices $u_7$ and $u_{13}$.
 
 \textbf{$i=4$)} If $n_1=2,n_2=8,n_3=3$, then $D\cap U=\{u_1,u_3,u_{11}\}$ and\\ $N(D\cap U)=\{v_1,v_2,\cdots,v_6,v_8,v_{10},v_{11},v_{12}\}$. Thus $\{v_7,v_9,v_{13}\}\subseteq D$ and $D=\{u_1,u_3,u_{11},v_7,v_9,v_{13}\}$. But $D$ does not dominate the vertex $u_5$, a contradiction.

 \textbf{$i=5$)} If $n_1=4,n_2=4,n_3=5$, then $D\cap U=\{u_1,u_5,u_9\}$ and $N(D\cap U)=\{v_1,v_2,\cdots,v_{10},v_{12}\}$. Thus $\{v_7,v_{11},v_{13}\}\subseteq D$ and $D=\{u_1,u_3,u_{11},v_7,v_9,v_{13}\}$. But $D$ does not dominate the vertices $u_2$ and $u_3$.
 
Consequently, $\gamma (W_{4,26})=7$.

We now consider the case $n=36$. By Theorem \ref{th1}, $\gamma (W_{4,36})\ge \frac{36}{5}>7$. On the other hand, the set $D=\{u_1,u_2,u_{10},u_{11},v_6,v_7,v_{15},v_{16}\}$ is a dominating set for the graph $W_{4,36}$ and therefore $\gamma (W_{4,36})=8$.
\end{proof}
We now consider the case $n\equiv 8$ (mod $10$). For $n=18,28$ and $38$ we have the following lemma.

\begin{lemma}\label{lem4}
For $n\in \{18,28,38\}$, we have:
\begin{center}
\emph {\begin{tabular}{c|c|c|c|c|c|c}
n&18&28&38\\
\hline
$\gamma (W_{4,n})$&4&7&10
\end{tabular}
}
\end{center}
\end{lemma}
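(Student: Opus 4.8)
The plan is to treat the three values $n=18,28,38$ (equivalently $\frac{n}{2}=9,14,19$) separately, in each case combining the lower bound of Theorem~\ref{th1} with an explicitly exhibited dominating set and, where a gap remains, a counting argument in the spirit of Lemmas~\ref{lem2} and~\ref{lem3}. The counting engine is always the same: writing $p=|D\cap U|$ and $q=|D\cap V|$ for a dominating set $D$, the set $D\cap U$ covers at most $4p$ vertices of $V$ while $D\cap V$ covers only itself inside $V$, so to dominate $V$ one needs $|N(D\cap U)|+q\ge \frac{n}{2}$ with $|N(D\cap U)|\le 4p$, and symmetrically $|N(D\cap V)|+p\ge\frac{n}{2}$ with $|N(D\cap V)|\le 4q$. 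Whenever these inequalities pin $p,q$ down, the residual slack bounds the number of coincident neighbour-pairs allowed in $D\cap U$, which by Lemma~\ref{lem0} translates into a restriction on how many $n_k$ (or sums of consecutive $n_k$) of the cyclic-sequence can lie in $\mathscr{M}_4=\{1,2,3,4,6,7\}$.

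For $n=18$ the argument is immediate: Theorem~\ref{th1} gives $\gamma(W_{4,18})\ge\frac{18}{5}>3$, hence $\gamma(W_{4,18})\ge 4$, and exhibiting any dominating set of size $4$ (a suitable small-index analogue of the sets built above) forces equality. For $n=28$ we have $|V|=14$ and Theorem~\ref{th1} gives $\gamma\ge 6$; an explicit dominating set of size $7$ supplies the upper bound, so it suffices to exclude $|D|=6$. There $p+q=6$ together with $4p+q\ge 14$ and $4q+p\ge 14$ forces $p=q=3$ and leaves a slack of exactly one, so $D\cap U$ may contain at most one coincident pair. Enumerating the cyclic-sequences $(n_1,n_2,n_3)$ with $n_1+n_2+n_3=14$ that meet this restriction—exactly as in the $n=26$ table of Lemma~\ref{lem3}—one reads off in each case the $V$-vertices left uncovered by $D\cap U$, forces them into $D\cap V$, and checks via Lemma~\ref{lem0} that some vertex of $U$ then remains undominated, a contradiction. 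Hence $\gamma(W_{4,28})=7$.

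For $n=38$ we have $|V|=19$ and Theorem~\ref{th1} only gives $\gamma\ge 8$, while the claimed value is $10$; an explicit dominating set of size $10$ gives the upper bound, so both $|D|=8$ and $|D|=9$ must be excluded. When $|D|=8$ the inequalities force $p=q=4$ with slack one, so at most one coincident pair in $D\cap U$, and the four-term cyclic-sequences summing to $19$ are eliminated exactly as before. When $|D|=9$ the inequalities force $(p,q)=(4,5)$, and the $U$-side slack is now two, so $D\cap U$ may have up to two coincident pairs. For each admissible cyclic-sequence $(n_1,n_2,n_3,n_4)$ with $n_1+n_2+n_3+n_4=19$ one again determines the uncovered $V$-vertices, forces them into $D\cap V$, and derives a contradiction using both the $U$-side and the $V$-side covering constraints to keep the enumeration finite.

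I expect the $n=38$ case, and within it the exclusion of $|D|=9$, to be the main obstacle. The larger slack permits two coincidences among the four vertices of $D\cap U$, so the list of cyclic-sequences to be ruled out is appreciably longer than the three-term lists of Lemma~\ref{lem3}, and one must deploy the symmetric counting bound on $N(D\cap V)$ to trim it. The real work is therefore the routine-but-lengthy elimination of these cyclic-sequences, organized into a table analogous to Tables~\ref{t1} and~\ref{t2}, rather than any single conceptual difficulty.
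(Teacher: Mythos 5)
Your proposal follows essentially the same route as the paper: Theorem~\ref{th1} plus an explicit $4$-set for $n=18$, and for $n=28$ and $n=38$ the same Pigeonhole/counting argument that pins down $|D\cap U|$ and $|D\cap V|$, bounds the number of coincident neighbour-pairs via Lemma~\ref{lem0}, and then eliminates the admissible cyclic-sequences case by case (the paper's Tables~\ref{t3} and~\ref{t4} for $n=38$). The only difference is cosmetic: you propose to exclude $|D|=8$ and $|D|=9$ separately for $n=38$, whereas the paper only rules out $9$-element dominating sets, which suffices because any $8$-element dominating set extends to one of size $9$.
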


\begin{proof}
For $n=18$, by Theorem \ref{th1} we have  $\gamma (W_{4,18})\ge \frac{18}{5}>3$. But the set $D=\{u_1,u_2,v_6,v_7\}$ is a dominating set for $W_{4,18}$ and therefore $\gamma (W_{4,18})=4$.

For $n=28$, by Theorem \ref{th1} we have $\gamma (W_{4,28})\ge \frac{28}{5}>5$  and the set $D=\{u_1,u_6,u_{11},u_{13},v_3,v_5,v_9\}$ is a dominating set for $W_{4,28}$  and therefore $6\le\gamma (W_{4,28})\le7$. Suppose by the contrary $\gamma (W_{4,28})=6$ and $D$ is a minimum dominating set for $\gamma (W_{4,28})$. Then by the Pigeonhole Principal either $|D\cap U|\le 3$ or $|D\cap V|\le 3$. If $|D\cap U|=3-a$ and $a\ge 0$, then $|D\cap V|= 3+a$. Now, the elements of $D\cap U$ dominate at most $4(3-a)$ elements of $V$ and $D$ dominates at most $4(3-a)+(3+a)=15-3a$ vertices of $V$. Thus $15-3a\ge |V|=14 $ therefore $a=0$ and we have $|D\cap U|=|D\cap V|=3$. Let $|D\cap U|=\{u_1,u_i,u_j\}$ and $1<i<j\le 14$ and $n_1=i-1,n_2=j-i,n_3=14+1-j$. Since $n_1+n_2+n_3=14$ we have $\{n_1,n_2,n_3\}\cap \mathscr{M}_4\ne \emptyset$. If $\mathscr{M}_4$ includs at least two numbers of $n_1,n_2,n_3,n_1+n_2,n_2+n_3,n_3+n_1$ then $D\cap U$ dominates at most $4\times3-2=10$ vertices of $V$ and $|D\cap V|\ge 14-10=4$, a contradiction.\\
The only remaining case is that $\mathscr{M}_4$ includes exactly one of the three number $n_1,n_2$ and $n_3$ and also $\{n_1+n_2,n_2+n_3,n_3+n_1\}\cap \mathscr{M}_4= \emptyset$. By symmetry we have $n_1=4,n_2=5,n_3=5$ and  $D\cap U=\{u_1,u_5,u_{10}\}$, $N(D\cap U)=\{v_1,v_2,v_3,v_4,v_5,v_6,v_8,v_{10},v_{11},v_{12},v_{13}\}$ thus $\{v_{7},v_{9},v_{14}\}\subseteq D$ and $D=\{u_1,u_5,u_{10},v_{7},v_{9},v_{14}\}$ but $D$ does not dominate the vertex $u_{3}$. That is a contradiction and therefore $\gamma (W_{4,28})=7$. For $n=38$, by Theorem \ref{th1} we have $\gamma (W_{4,38})\ge \frac{38}{5}>7$  and the set $D=\{u_1,u_6,u_{11},u_{16},u_{18},$ $v_3,v_5,v_{10},v_{13},v_{15}\}$ is a dominating set for $W_{4,38}$  and therefore $8\le\gamma (W_{4,38})\le10$. Let $\gamma (W_{4,38})<10$ and $D$ is a dominating set for $\gamma (W_{4,38})$ with $|D|=9$. Then by the Pigeonhole Principal either $|D\cap U|\le 4$ or $|D\cap V|\le4$. If $|D\cap U|=4-a$ and $a\ge 0$, then $|D\cap V|= 5+a$. Now, the elements of $D\cap U$ dominate at most $4(4-a)$ elements of $V$ and $D$ dominates at most $4(4-a)+(5+a)=21-3a$ vertices of $V$. Thus $21-3a\ge |V|=19 $ that results $a=0$ and we have $|D\cap U|=4$ and $|D\cap V|=5$. Let $|D\cap U|=\{u_1,u_i,u_j, u_k\}$ , where $1<i<j<k\le 19$ and $n_1, n_2, n_3, n_4$ be the cyclic-sequence of $D\cap U$. Since $n_1+n_2+n_3+n_4=19$ we have $\{n_1,n_2,n_3,n_4\}\cap \mathscr{M}_4\ne \emptyset$. If $\mathscr{M}_4$ includs at least three numbers of $n_1,n_2,n_3,n_4,n_1+n_2,n_2+n_3,n_3+n_4,n_4+n_1$ then $D\cap U$ dominates at most $4\times4-3=13$ vertices of $V$ and $|D\cap V|\ge 19-13=6$, a contradiction.\\
If we wish that $\mathscr{M}_4$ includs exactly one numbers of  $n_1,n_2,n_3,n_4$, we have three cases: 
\begin{table}[ht] 
\captionsetup{type=table} 
\begin{center}

\begin{tabular}{c||c|c|c}
 $case$&1&2&3
\\ \hline \hline
 $n_1$&4&1&8
\\ \hline
$n_2$&5&8&1
\\ \hline 
$n_3$&5&5&5
\\ \hline 
$n_4$&5&5&5
\end{tabular}
\caption{$n=38$ with one $n_i$ in $\mathscr{M}_{\Delta}$}\label{t3}
\end{center}
\end{table} 

\textbf{i=1)} If $n_1=4,n_2=5,n_3=5,n_4=5$ and $D\cap U=\{u_1,u_5,u_{10},u_{15}\}$ thus $\{v_7,v_9,v_{14},v_{19}\}\subseteq D$  but $\{u_1,u_5,u_{10},u_{15},v_7,v_9,v_{14},v_{19}\}$ does not dominate the vertices $u_3$ and $u_{17}$. For dominating $u_3$ and $u_{17}$, we need two vertices and therefore $|D|\ge 10$, a contradiction.

\textbf{i=2)} If $n_1=1,n_2=8,n_3=5,n_4=5$ and $D\cap U=\{u_1,u_2,u_{10},u_{15}\}$ thus $\{v_6,v_7,v_{12},v_{14},v_{19}\}\subseteq D$ and $D=\{u_1,u_2,u_{10},u_{15},v_6,v_7,v_{12},v_{14},v_{19}\}$ but $D$ does not dominate  the vertices $u_8$ and $u_{17}$.

\textbf{i=3)} If $n_1=8,n_2=1,n_3=5,n_4=5$ and $D\cap U=\{u_1,u_9,u_{10},u_{15}\}$ thus $\{v_5,v_6,v_7,v_{14},v_{19}\}\subseteq D$ and $D=\{u_1,u_9,u_{10},u_{15},v_5,v_6,v_7,v_{14},v_{19}\}$ but $D$ does not dominate  the vertex $u_8$.

Now we consider the cases that $\mathscr{M}_4$ includs exactly two numbers of the cyclic-sequence $n_1,n_2,n_3,n_4$ and $\{n_1+n_2,n_2+n_3,n_3+n_4,n_4+n_1\}\cap \mathscr{M}_4= \emptyset$. By symmetry we have ten cases:
\begin{table}[ht]
\captionsetup{type=table} 
\begin{center}
\begin{tabular}{c||c|c|c|c|c|c|c|c|c|c}
 $case$&1&2&3&4&5&6&7&8&9&10
\\ \hline \hline
 $n_1$&1&2&4&9&3&9&8&3&3&3
\\ \hline
$n_2$&9&8&1&1&2&2&3&6&5&5
\\ \hline 
$n_3$&1&1&9&4&9&3&5&5&6&5
\\ \hline 
$n_4$&8&8&5&5&5&5&3&5&5&6
\end{tabular}
\caption{$n=38$ with two $n_i$ in $\mathscr{M}_{\Delta}$}\label{t4}
\end{center}
\end{table}

\textbf{i=1)} If $n_1=1,n_2=9,n_3=1,n_4=8$ and $D\cap U=\{u_1,u_2,u_{11},u_{12}\}$ thus $\{v_6,v_7,v_{10},v_{16},v_{17}\}\subseteq D$ and $D=\{u_1,u_2,u_{11},u_{12},v_6,v_7,v_{10},v_{16},v_{17}\}$ but $D$ does not dominate  the vertex $u_8$. 
  
\textbf{i=2)} If $n_1=2,n_2=8,n_3=1,n_4=8$ and $D\cap U=\{u_1,u_3,u_{11},u_{12}\}$ thus $\{v_5,v_7,v_9,v_{16},v_{17}\}\subseteq D$ and  $D=\{u_1,u_3,u_{11},u_{12},v_5,v_7,v_9,v_{16},v_{17}\}$ but    $D$ does not dominate  the vertex $u_{18}$.   

\textbf{i=3)} If $n_1=4,n_2=1,n_3=9,n_4=5$ and $D\cap U=\{u_1,u_5,u_6,u_{15}\}$ thus  $\{v_{10},v_{11},v_{14},v_{17},v_{19}\}\subseteq D$ and   $D=\{u_1,u_5,u_6,u_{15},v_{10},v_{11},v_{14},v_{17},v_{19}\}$ but    $D$ does not dominate  the vertex $u_2$.  
   
\textbf{i=4)} If $n_1=9,n_2=1,n_3=4,n_4=5$ and $D\cap U=\{u_1,u_{10},u_{11},u_{15}\}$ thus  $\{v_{5},v_{6},v_{7},v_{9},v_{19}\}\subseteq D$ and   $D=\{u_1,u_{10},u_{11},u_{15},v_{5},v_{6},v_{7},v_{9},v_{19}\}$ but    $D$ does not dominate  the vertices $u_{13}$ and $u_{14}$. 

\textbf{i=5)} If $n_1=3,n_2=2,n_3=9,n_4=5$ and $D\cap U=\{u_1,u_4,u_6,u_{15}\}$ thus  $\{v_{10},v_{12},v_{14},v_{17},v_{19}\}\subseteq D$ and   $D=\{u_1,u_4,u_6,u_{15},v_{10},v_{12},v_{14},v_{17},v_{19}\}$ but    $D$ does not dominate  the vertices $u_{2}$ and $u_{8}$. 

\textbf{i=6)} If $n_1=9,n_2=2,n_3=3,n_4=5$ and $D\cap U=\{u_1,u_{10},u_{12},u_{15}\}$ thus  $\{v_{5},v_{6},v_{7},v_{9},v_{14}\}\subseteq D$ and   $D=\{u_1,u_{10},u_{12},u_{15},v_{5},v_{6},v_{7},v_{9},v_{14}\}$ but    $D$ does not dominate  the vertex $u_{16}$. 

\textbf{i=7)} If $n_1=8,n_2=3,n_3=5,n_4=3$ and $D\cap U=\{u_1,u_9,u_{12},u_{17}\}$ thus  $\{v_{3},v_{6},v_{7},v_{11},v_{14}\}\subseteq D$ and   $D=\{u_1,u_9,u_{12},u_{17},v_{3},v_{6},v_{7},v_{11},v_{14}\}$ but    $D$ does not dominate  the vertex $u_{16}$. 

\textbf{i=8)} If $n_1=3,n_2=6,n_3=5,n_4=5$ and $D\cap U=\{u_1,u_4,u_{10},u_{15}\}$ thus  $\{v_{6},v_{9},v_{12},v_{14},v_{19}\}\subseteq D$ and   $D=\{u_1,u_4,u_{10},u_{15},v_{6},v_{9},v_{12},v_{14},v_{19}\}$ but    $D$ does not dominate  the vertex $u_{17}$. 

\textbf{i=9)} If $n_1=3,n_2=5,n_3=6,n_4=5$ and $D\cap U=\{u_1,u_4,u_9,u_{15}\}$ thus  $\{v_{6},v_{13},v_{14},v_{17},v_{19}\}\subseteq D$ and   $D=\{u_1,u_4,u_9,u_{15},v_{6},v_{13},v_{14},v_{17},v_{19}\}$ but    $D$ does not dominate  the vertices $u_2$ and $u_8$. 

\textbf{i=10)} If $n_1=3,n_2=5,n_3=5,n_4=6$ and $D\cap U=\{u_1,u_4,u_{9},u_{14}\}$ thus  $\{v_{3},v_{6},v_{13},v_{18},v_{19}\}\subseteq D$ and   $D=\{u_1,u_4,u_{9},u_{14},v_{3},v_{6},v_{13},v_{18},v_{19}\}$ but    $D$ does not dominate  the vertices $u_{7}$ and $u_{8}$. 
      
   Hence, $W_{4,38}$ has not any dominating set with 9 vertices and  $W_{4,38}=10$ as desired.
\end{proof}

Now for $n\ge 48$ with $n\equiv8$ (mod 10) we determine the domination  number of $W_{4,n}$ as follows.

\begin{lemma}\label{lem5}
For each even integer $n\ge 48$, $n\equiv8$ \emph{(mod 10)}, we have  $\gamma (W_{4,n})=2\lfloor\frac{n}{10}\rfloor+4$.
\end{lemma}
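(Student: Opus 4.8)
The plan is to prove $\gamma(W_{4,n})\le 2t+4$ and $\gamma(W_{4,n})\ge 2t+4$, where $n=10t+8$ with $t\ge 4$, so that $2\lfloor n/10\rfloor=2t$ and $|U|=|V|=n/2=5t+4$. For the upper bound I would exhibit an explicit dominating set of the same shape as in Lemmas \ref{lem1} and \ref{lem2}: the $t+1$ almost-evenly-spaced vertices $\{u_1,u_6,\dots,u_{5t+1}\}$ of $U$ (whose cyclic-sequence is $5,5,\dots,5,4$), together with $\{v_5,v_{10},\dots,v_{5t}\}$ and a bounded number of correction vertices of $V$, for a total of $2t+4$ vertices. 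Using that each chosen $u_i$ dominates exactly $v_i,v_{i+1},v_{i+3},v_{i+7}$ and symmetrically for the chosen $v_j$, a direct check shows every vertex is dominated.

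For the lower bound the first reduction is that it suffices to show there is \emph{no} dominating set of cardinality exactly $2t+3$, since adjoining an arbitrary vertex to any smaller dominating set yields a dominating set of size $2t+3$; hence the nonexistence of such a set forces $\gamma(W_{4,n})>2t+3$, i.e.\ $\gamma(W_{4,n})\ge 2t+4$. So suppose $D$ is a dominating set with $|D|=2t+3$. By the Pigeonhole Principle one of $|D\cap U|,|D\cap V|$ is at most $t+1$; assume $|D\cap U|=t+1-a$ with $a\ge 0$, so $|D\cap V|=t+2+a$. Since $D\cap U$ dominates at most $4(t+1-a)$ vertices of $V$, the number of dominated vertices of $V$ is at most $4(t+1-a)+(t+2+a)=5t+6-3a$, which must be at least $|V|=5t+4$; hence $a=0$, so $|D\cap U|=t+1$, $|D\cap V|=t+2$, and $|N(D\cap U)|\ge |V\setminus D|=4t+2$. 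As every $u$-vertex has degree $4$, the cyclic-sequence $n_1,\dots,n_{t+1}$ of $D\cap U$ (summing to $5t+4$ by Observation \ref{obs1}) can create at most two coincidences, i.e.\ at most two pairs of chosen $u$-vertices whose index-distance lies in $\mathscr{M}_4=\{1,2,3,4,6,7\}$, by Lemma \ref{lem0}.

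The core of the argument is to turn this into a finite list of admissible cyclic-sequences. Writing $n_k=5+e_k$, Observation \ref{obs1} gives $\sum_k e_k=-1$, and since $5\notin\mathscr{M}_4$ only the gaps with $n_k\ne 5$ (and the short arcs $n_k+n_{k+1}$, etc.) can create coincidences. Because the total deviation $\sum e_k=-1$ is bounded, the sequence differs from the uniform sequence $5,5,\dots,5$ only inside a bounded window, so the admissible shapes form a finite list independent of $t$ — exactly as recorded in the tables of Lemmas \ref{lem2} and \ref{lem4}. The one-coincidence shape is forced to be a single $4$ with all other gaps $5$; the two-coincidence shapes are the analogues of Table \ref{t1}, in which one large gap of $8$ or $9$ absorbs the correction so that no third short arc appears. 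For each shape I would place $D\cap U$ concretely using vertex-transitivity (take $u_1\in D$), read off the vertices of $V$ left undominated by $D\cap U$ — these must belong to $D\cap V$ — and note that, since $|D\cap V|=t+2$ exceeds the number of forced vertices by only one or two, at most one or two vertices of $D\cap V$ are free. I then exhibit one or two vertices of $U$ whose entire closed neighbourhood avoids $D$ and observe that the free vertices of $D\cap V$ are too far apart in index-distance to reach them, a contradiction.

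The hard part will be exactly this enumeration-and-verification step. One must be certain that the list of admissible cyclic-sequences is complete: every attempt to realise only two coincidences must either violate $\sum e_k=-1$ or secretly produce a third coincidence through a short arc $n_k+n_{k+1}\in\mathscr{M}_4$ (for instance a gap $1$ or $2$ adjacent to a gap $5$ forces the distance $6$ or $7$ into $\mathscr{M}_4$). For each surviving shape I then have to correctly locate an undominated vertex of $U$ and confirm the one or two free vertices of $D\cap V$ cannot cover it. The wrap-around bookkeeping — because $n/2=5t+4$ is not a multiple of $5$ — is precisely what makes such an undominated $u$-vertex appear, and this is where the computation must be carried out with care.
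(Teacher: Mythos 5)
Your plan follows the paper's proof essentially step for step: the same explicit $(2t+4)$-element dominating set for the upper bound, and for the lower bound the same pigeonhole/degree count forcing $|D\cap U|=t+1$, $|D\cap V|=t+2$ and at most two coincidences, hence a cyclic-sequence that is all $5$'s except for four entries summing to $19$, followed by the same finite enumeration (the paper reuses Tables \ref{t3} and \ref{t4} from the $n=38$ case of Lemma \ref{lem4}). Your only departures are a small streamlining --- ruling out dominating sets of size exactly $2t+3$, which subsumes the size-$(2t+2)$ case the paper treats separately --- and a slightly imprecise claim that the deviation from the uniform sequence sits in a ``bounded window'': the non-$5$ gaps need not be contiguous (the paper's subcases for case 7 of Table \ref{t4} must track a wandering gap of $3$), so the deferred verification is over positions as well as values, exactly as the paper carries out.
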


\begin{proof} 
Let $n=10t+8$, where $t\ge 4$. By Theorem \ref{th1}, we have $\gamma (W_{4,n})\ge \frac{n}{5}>2t+1$. The set $D=\{u_1,u_6,\cdots,u_{5t+1}\}\cup\{v_5,v_{10},\cdots,v_{5t}\}\cup\{v_3,v_{5t-2},v_{5t+3}\}$ is a dominating set with $2t+4$ elements, and so, $2t+2\le \gamma (W_{4,n})\le 2t+4$. We show that $\gamma (W_{4,n})= 2t+4$.

First, assume that $\gamma(W_{4,n})=2t+2$. Let $D$ be a minimum dominating set of $W_{4,n}$. Then by the Pigeonhole Principal either $|D\cap U|\le t+1$ or $|D\cap V|\le t+1$. Without loss of generality, assume that $|D\cap U|= t+1-a$, where $a\ge 0$. Then $|D\cap V|= t+1+a$. Observe that $D\cap U$ dominates at most $4t+4-4a$ vertices of $V$, and therefore, $D$ dominates at most $(4t+4-4a)+(t+1+a)=5t-3a+5$ vertices of $V$. Since $D$ dominates all vertices in $V$, we have $5t-3a+5\ge 5t+4$ and so $a=0$. Then $|D\cap U|=|D\cap V|=t+1$. Also we have $|V-D|=4t+3\le |N(D\cap U)|\le 4t+4$ and $|U-D|=4t+3\le |N(D\cap V)|\le 4t+4$. Let $D\cap U=\{u_{i_1},u_{i_2},\cdots,u_{i_{t+1}}\}$ and $n_1, n_2, \cdots, n_{t+1}$.  By Observation \ref{obs1}, we have $\overset{t+1}{\underset{k=1}{\Sigma}}n_k=5t+4$ and therefore there exist $k'$ such that  $n_{k'} \in \mathscr{M}_4$. By Lemma \ref{lem0}, $D\cap U$ dominates at most $4t+3$ vertices from $V$. Then $|N(D\cap U)|=|N(D\cap V)|=4t+3$ and $k'$ is unique. If there exists  $1\le k''\le t+1$ such that $n_{k''}\ge 8$, then $5t+4=\overset{t+1}{\underset{k=1}{\Sigma}}n_k\ge n_{k'}+n_{k''}+5(t-1)\ge 1+8+5(t-1)=5t+4$ that implies $n_{k'}=1, n_{k''}=8$ and for each $k\notin\{k',k''\}$ we have $n_k=5$. Now in each arrangement of the cyclic-sequence of $D\cap U$, we have one adjacency between 1 and 5. Then we have two vertices in $D\cap U$ with index-distance equal to 6, a contradiction. Thus for $k\ne k'$ we have $n_k=5$ and $n_{k'}=4$. We have (by symmetry) $n_1=n_2=\cdots =n_t=5$ and $n_{t+1}=4$ and $D\cap U=\{u_1,u_6,\cdots,u_{5t+1}\}$. Now $D\cap U$ doesn't dominate the vertices $v_3,v_5,v_{10},\cdots,v_{5t}$ and so $\{v_3,v_5,v_{10},\cdots,v_{5t}\}\subseteq D$. Thus $D=\{u_1,u_6,\cdots,u_{5t+1},v_3,v_5,v_{10},\cdots,v_{5t}\}$, but $D$ does not dominate two vertices $u_{5t-2}$ and $u_{5t+3}$, a contradiction.
 
Now, assume that $\gamma(W_{4,n})=2t+3$. Let $D$ be a minimum dominating set of $W_{4,n}$. Then by the Pigeonhole Principal either $|D\cap U|\le t+1$ or $|D\cap V|\le t+1$. Without loss of generality, suppose $|D\cap U|= t+1-a$, where $a\ge 0$. Then $|D\cap V|= t+2+a$. Observe that $D\cap U$ dominates at most $4t+4-4a$ vertices of $V$ and therefore, $D$ dominates at most $(4t+4-4a)+(t+2+a)=5t-3a+6$ vertices of $V$. Since $D$ dominates all vertices in $V$, we have $5t-3a+6\ge 5t+4$ and $a=0$, $|D\cap U|=t+1$ and $|D\cap V|=t+2$. Also we have $4t+2\le |N(D\cap U)|\le 4t+4$. Since $4t+2\le |N(D\cap U)|\le 4t+4$, at most two elements of $n_1, n_2, \cdots, n_{t+1}$ can be in $\mathscr{M}_4$. If $x$ is the number of 5's in the cyclic-sequence of $D\cap U$, then by Observation \ref{obs1}, we have $\overset{t+1}{\underset{k=1}{\Sigma}}n_k=5t+4\ge 1+1+8(t-x-1)+5x$ and therefore $3x\ge 3t-10$ that implies $x\ge t-3$. Thus  $t-3$ elements of the cyclic-sequence are equal to 5. The sum of the remaining four values of the cyclic-sequence is 19, and at most two of them are in $\mathscr{M}_4$. In the last case of Lemma \ref{lem3}, for $n=38$, we identified all such cyclic-sequences and placed them in two tables, Table \ref{t3} and Table \ref{t4}. We now continue according to Table \ref{t3} and Table \ref{t4}.

In the case (i=1) in Table \ref{t3}, we have $n_1=4,n_2=\cdots=n_{t+1}=5$ and  $D\cap U=\{u_1,u_5,u_{10},\cdots,u_{5t}\}$. Thus $\{v_7,v_9,v_{14},\cdots,v_{5t+4}\}\subseteq D$. But $\{u_1,u_5,u_{10},\cdots,u_{5t},v_7,v_9,v_{14},\cdots,v_{5t+4}\}$ does not dominate the vertices $u_3$ and $u_{5t+2}$. For dominating $u_3$ and $u_{5t+2}$, we need two vertices and therefore $|D|\ge 2t+4$, a contradiction. In the cases (i$\in\{2,3\}$) in Table \ref{t3}, we have to add 5's to the end of the cyclic-sequence and construct the corresponding set $D$ with $2t+3$ elements. In both cases we obtain that $N[u_8]\cap D=\emptyset$. Then $D$ is not a dominating set, a contradiction.

In the case (i=1) in Table \ref{t4}, we can't add a 5 to the cyclic-sequence, since by adding a 5 to the cyclic-sequence we obtain two consecutive values of the cyclic-sequence which one is $5$ and the other is $1$ and their sum is $6$ which belongs to $\mathscr{M}_4$, a contradiction.

In the case (i=2) in Table \ref{t4}, we can't add a 5 to the cyclic-sequence, since by adding a 5 to the cyclic-sequence we obtain two consecutive values of the cyclic-sequence which one is $5$ and the other is $1$ or $2$, and their sum is $6$ or $7$, which belongs to $\mathscr{M}_4$, a contradiction.

In the cases (i$\in \{3,4,5,6\}$) in Table \ref{t4}, we have to add 5's to the end of the cyclic-sequence and construct the corresponding set $D$ with $2t+3$ elements. In (i=3), we obtain that $N[u_2]\cap D=\emptyset$, in (i=4), we obtain that $N[\{u_{13},u_{14}\}]\cap D=\emptyset$, in (i=5), we obtain that $N[\{u_2,u_8\}]\cap D=\emptyset$, and in (i=6), we obtain that $N[u_{16}]\cap D=\emptyset$. In all four cases, $D$ is not a dominating set, a contradiction.

In the case (i=7) in Table \ref{t4}, by adding 5's to the cyclic-sequence, we obtain some different new cyclic-sequences. We divide them into three categories.

c1) $n_1=8$, $n_2=3$ and $n_3=5$. In this category, the constructed set, $D$, does not dominate $u_{16}$, a contradiction.

c2) $n_1=8$, $n_2=5$, $n_3=3$ and $n_4=5$. In this category, the constructed set, $D$, does not dominate $u_{15}$ a contradiction.

c3) $n_1=8$, $n_2=n_3=5$ and if $n_i=n_j=3$, then $|i-j|\ge 2$. In this category, the constructed set, $D$, does not dominate $u_{5i+1}$, a contradiction. (Notice that this category does not appear for $n\le5$.) In the cases (i$\in\{8,9,10\}$) in Table \ref{t4}, by adding 5's to the cyclic-sequences, we obtain some different new cyclic-sequences. we divide them into two categories.

c1) $n_1=3$ and $n_{t+1}=5$. In this category, the constructed set, $D$, does not dominate $u_2$, a contradiction.

c2) $n_1=3$ and $n_{t+1}=6$. In this category, the constructed set, $D$, does not dominate $u_7$ and $u_8$. Then $D$ is not a dominating set, a contradiction.

Hence, $\gamma (W_{4,n})=2t+4=2\lfloor\frac{n}{10}\rfloor+4$
\end{proof}

Now a consequent of Lemmas \ref{lem1}, \ref{lem2}, \ref{lem3},\ref{lem4} and \ref{lem5} implies the following theorem which is the main result of this section.

\begin{theorem} For each integer $n\ge 16$, we have:
$$\gamma (W_{4,n})=2\lfloor\frac{n}{10}\rfloor+\left\{\begin{array}{cc}0&n\equiv0 \emph{ (mod 10)}\\2&n=16,18,36\,;\,n\equiv2,4 \emph{ (mod 10)} \\3&n=28\,;\,n\equiv6 \emph{ (mod 10)},n\ne16,36\\4&n\equiv8 \emph{ (mod 10)},n\ne18,28\end{array}  \right. .$$
\end{theorem}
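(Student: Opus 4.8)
The plan is to prove this statement purely as a case analysis on the residue of $n$ modulo $10$. Since $W_{4,n}$ requires $4\le\lfloor\log_2 n\rfloor$, we have $n\ge 16$, and every even integer $n\ge 16$ is congruent to exactly one of $0,2,4,6,8$ modulo $10$. Lemmas \ref{lem1}--\ref{lem5} together cover all five classes, so the entire argument reduces to matching each lemma's output against the appropriate branch of the piecewise formula. No new graph-theoretic work is needed; the content is bookkeeping on the floor function $\lfloor n/10\rfloor$, together with a careful placement of the six small exceptional values.

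First I would dispose of the three residue classes handled by Lemma \ref{lem1}. For $n\equiv 0\pmod{10}$ the lemma gives $\gamma(W_{4,n})=2\lfloor n/10\rfloor$, which is exactly the ``$+0$'' branch; for $n\equiv 2,4\pmod{10}$ it gives $2\lfloor n/10\rfloor+2$, which is the generic part of the ``$+2$'' branch. These classes carry no exceptions, so they transfer verbatim.

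Next I would treat $n\equiv 6\pmod{10}$. For $n\ge 46$, Lemma \ref{lem2} gives $2\lfloor n/10\rfloor+3$, matching the generic ``$+3$'' branch. The only smaller values are $n\in\{16,26,36\}$, handled by Lemma \ref{lem3}. Here the essential check is that the computed values $\gamma(W_{4,16})=4$, $\gamma(W_{4,26})=7$, $\gamma(W_{4,36})=8$ sit over the base quantities $2\lfloor n/10\rfloor=2,4,6$ with offsets $+2,+3,+2$ respectively. This is precisely why the formula must peel $n=16$ and $n=36$ out into the ``$+2$'' branch while $n=26$ remains in the ``$+3$'' branch. I would state this reconciliation explicitly, since it is the one place where the piecewise description deviates from a clean residue rule and is the main thing a reader must be convinced of.

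Finally I would treat $n\equiv 8\pmod{10}$. For $n\ge 48$, Lemma \ref{lem5} gives $2\lfloor n/10\rfloor+4$, the generic ``$+4$'' branch; the smaller values $n\in\{18,28,38\}$ come from Lemma \ref{lem4}, yielding $4,7,10$, i.e.\ offsets $+2,+3,+4$ over the base values $2,4,6$. This accounts for the exceptions $n=18$ (placed in ``$+2$'') and $n=28$ (placed in ``$+3$''), while $n=38$ already agrees with the generic ``$+4$'' branch and needs no special listing. Assembling the five classes then yields the stated formula for every even $n\ge 16$. The only genuine obstacle is verifying the exceptional-case offsets against the $\lfloor n/10\rfloor$ values, and that is routine arithmetic once Lemmas \ref{lem3} and \ref{lem4} are granted; I expect no difficulty beyond careful accounting.
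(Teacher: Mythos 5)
Your proposal is correct and follows essentially the same route as the paper, which simply assembles the theorem from Lemmas \ref{lem1}--\ref{lem5} without further argument; your explicit reconciliation of the exceptional values ($\gamma=4,7,8$ over bases $2,4,6$ for $n=16,26,36$ and $\gamma=4,7,10$ over bases $2,4,6$ for $n=18,28,38$) is accurate and, if anything, more careful than the paper's one-line assertion. No gaps.
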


\section{Conclusion} 

In this manuscript we study the domination number of $4$-regular Kn\"odel graphs. However there are some open related problems that  will be useful for studying.

\noindent \textbf{Problem 1} Domination number of $k$-regular Kn\"odel graphs for $k\ge 5$ obtain.\\
\textbf{Problem 2} Total domination number of $k$-regular Kn\"odel graphs for $k\ge 4$ obtain.\\
\textbf{Problem 3} Connected domination number of $k$-regular Kn\"odel graphs for $k\ge 3$ obtain.\\
\textbf{Problem 3} May be studied independent domination number of $k$-regular Kn\"odel graphs for $k\ge 3$ obtain.

\end{document}